\newtheorem{theorem}{Theorem}[section]
\begin{document}

\title{Divisibility and Sequence Properties of  $\sigma^+$ and $\varphi^+$} 
\author[Sagar Mandal]{Sagar Mandal}
\address{Department of Mathematics\\Indian Institute of Technology Ropar, Punjab, India.}
\email{sagarmandal31415@gmail.com}

\maketitle
\let\thefootnote\relax
\footnotetext{\it Keywords and phrases: Euler's totient function, Unitary totient function, Schemmel’s totient function, Jordan totient function, Sum of positive divisors function.}
\let\thefootnote\relax
\footnotetext{\it MSC2020:  11A25} 
\maketitle

\begin{abstract}
Inspired by Lehmer’s and Deaconescu’s conjectures, as well as various analogue problems concerning Euler’s totient function $\varphi(n)$, Schemmel’s totient function $S_{2}(n)$, Jordan totient function $J_k$, and the unitary totient function $\varphi^{*}(n)$, we investigate analogous divisibility problems involving the functions $\sigma(n)$, $\sigma^{+}(n)$, and $\varphi^{+}(n)$.
 Further, we establish some interesting properties of the sequences $\left\{\sigma^+(n)\right\}_{n=1}^\infty$ and $\left\{\varphi^+(n)\right\}_{n=1}^\infty$, in particular, we prove that each of these sequences contains infinitely many arithmetic progressions of length $3$.\\
{\bf Keywords:}  Euler's totient function, Unitary totient function, Schemmel’s totient function, Jordan totient function, Sum of positive divisors function.

\end{abstract}

\section{Introduction}
Lehmer \cite{10} conjectured that if $\varphi(n)\mid (n-1)$, where $\varphi(n)$ is the Euler’s totient function, then $n$ must be a prime number. A positive composite integer $n$ that satisfies the condition $\varphi(n)\mid (n-1)$ is called a Lehmer number. Let $S_2(n)$ denote Schemmel’s totient function, a multiplicative function defined by
$$
S_2(p^k) :=
\begin{cases}
0, & \text{if } p = 2 \\
p^{k-1} (p - 2), & \text{if } p > 2
\end{cases}
$$
where $p$ is a prime number and $k\in \mathbb{Z}^{+}$. Deaconescu \cite{5} considered Schemmel’s totient function and conjectured that for $n\geq 2$
$$S_2(n)\mid \varphi(n)-1$$
if and only if $n$ is prime. A positive composite integer $n$ is said to be a Deaconescu number if $S_2(n)\mid \varphi(n)-1$. Both of these conjectures remain unproven, interesting properties of Lehmer numbers can be found in~\cite{1,2,3,7,9,11,12,15}, and interesting properties of Deaconescu numbers can be found in~\cite{8,13}. Besides these problems, many analogues of Lehmer’s problem have been studied, for example, the analogue of Lehmer’s problem for the unitary totient function $\varphi^{*}$, namely the problem $\varphi^{*}(n)\mid (n-1)$, and for the Jordan totient function $J_k$, namely the problem $J_k(n)\mid (n^k-1)$, have been investigated in \cite{18}. The divisibility condition $(\varphi(n)+1)\mid n$ was studied in \cite{4}, and the problem $\varphi(n)^2 \mid (n^2 - 1)$ was examined and completely resolved in \cite{11}. For further analogues of Lehmer’s problem, interested readers may consult \cite{6,16}.
In a recent paper, the author~\cite{14} revisits the functions 
$$\varphi^{+}(n)  \ \  =\ \  \prod_{p\mid n}(\varphi(p^{v_{p}(n)})+1),\quad \varphi^+(1)=1 $$
and
$$\sigma^{+}(n)  \ \  =\ \  \prod_{p\mid n}(\sigma(p^{v_{p}(n)})+1),\quad \sigma^+(1)=1$$
where $v_{p}(n)$ denotes the highest power of a prime number $p$ dividing $n$, which were originally introduced in~\cite{17}. In this note, he investigates the behavior of 
$$
\sum_{\substack{n \leq x \\ \omega(n)=2}} \left(\varphi^+ - \varphi\right)\left(n\right)
\quad \text{and} \quad
\sum_{\substack{n \leq x \\ \omega(n)=2}} \left(\sigma^+- \sigma\right)\left(n\right),
$$
for all real $x \ge 6$. Inspired by these conjectures and problems, we investigate whether a similar statement holds for $n+1\mid \sigma^+(n)$ and $\varphi^+(n)\mid (n-1)$. Later, we explore and establish several interesting properties of the sequences $\left\{\sigma^+(n)\right\}_{n=1}^\infty$ and $\left\{\varphi^+(n)\right\}_{n=1}^\infty$.
\section{Main Results}
Before we begin proving results related to $\sigma^+(n)$, let us first explore some basic divisibility properties of the classical sum-of-divisors function $\sigma(n)$. Note that if $n=p$, where $p$ is a prime, then $n+1\mid \sigma(n)$ as $\sigma(n)=p+1$. We now attempt to determine whether there exists any positive composite integer $n$ such that 
$n+1\mid \sigma(n),$ which is equivalent to saying that there exists some positive integer $k$ such that $\sigma(n)=k(n+1)$. It is easy to note that
if $n=pq$ for some prime numbers $p,q$ then $n+1\nmid \sigma(n)$ as if  $n+1\mid \sigma(n)$ then $\sigma(n)=k(n+1)$ for some positive integer $k$. Then $k\geq2$ as for $k=1$ clearly $n+1\nmid \sigma(n)$. Note that
$\sigma(pq)=k(pq+1)$ implies,
$\sigma(p)\sigma(q)=k(pq+1)$, since $\sigma(p)=p+1,\sigma(q)=q+1$ we have
$pq+p+q+1=kpq+k$
that is
$(pq+1)(k-1)=p+q.$
Since 
$$\frac{p+q}{pq+1}<\frac{p+q}{pq}=\frac{1}{p}+\frac{1}{q}<1$$
we have
$$k=1+\frac{p+q}{pq+1}<1+1=2$$
a contradiction. Now let us consider the case $n=p^2q$, where $p,q$ are prime numbers. Also consider $k=2$. Since $\sigma(p^2)=1+p+p^2$ we have
$$\sigma(p^2q)=(1+p+p^2)(q+1)=2(p^2q+1)$$
which is equivalent to
\begin{align}\label{1}
    q=1+\frac{2p}{p^2-p-1}.
\end{align}

Now choose $p=2$ to get $q=5$ and thus we have a desired $n=2^2\cdot 5=20$ so that $n+1\mid\sigma(n)$. 
\begin{theorem}\label{thm2}
If $\sigma(n)=2(n+1)$ holds for $n=p^2q$ for some primes $p,q$ then $p=2,q=5$ is the only solution.
\end{theorem}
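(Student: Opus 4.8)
The plan is to take equation \eqref{1} as the starting point and convert the requirement that $q$ be prime into a finite divisibility condition on $p$ alone. Since $q$ is assumed prime, it is in particular a positive integer, so the fractional term in \eqref{1} must itself be a positive integer; equivalently, $p^2 - p - 1$ must divide $2p$. The first step is therefore to record that $p^2 - p - 1 \mid 2p$, observing that $p^2 - p - 1 \ge 1 > 0$ for every prime $p \ge 2$, so the fraction is genuinely positive and this divisibility is the correct reformulation.

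The key step is a coprimality observation that collapses the problem to a two-case check. Writing $p^2 - p - 1 = p(p-1) - 1$ shows at once that $\gcd(p,\, p^2 - p - 1) = \gcd(p,\, -1) = 1$. Since $p^2 - p - 1$ divides $2p$ and is coprime to $p$, it must divide $2$. Hence $p^2 - p - 1 \in \{1, 2\}$.

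It then remains to solve these two equations. From $p^2 - p - 1 = 1$ we get $p^2 - p - 2 = (p-2)(p+1) = 0$, whose only positive solution is $p = 2$. From $p^2 - p - 1 = 2$ we get $p^2 - p - 3 = 0$, whose discriminant $13$ is not a perfect square, so there is no integer solution. Thus $p = 2$ is forced, and substituting back into \eqref{1} gives $q = 1 + 4/(4-2-1) = 5$, which is indeed prime; this establishes that $p = 2$, $q = 5$ is the unique solution.

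I would expect no serious obstacle here; the only point requiring a moment's care is the passage from $p^2 - p - 1 \mid 2p$ to $p^2 - p - 1 \mid 2$ via coprimality, rather than a cruder bounding argument. For completeness I note that the bounding route also works: for $p \ge 5$ one has $0 < 2p/(p^2 - p - 1) < 1$ (equivalently $p^2 - 3p - 1 > 0$), so $q$ cannot be an integer, which together with the direct checks $p = 2$ and $p = 3$ would finish the proof as well. The coprimality version is shorter and avoids the case split on the size of $p$, so that is the one I would present.
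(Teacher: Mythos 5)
Your proof is correct. You and the paper share the same starting point --- from \eqref{1}, the integrality of $q$ forces $p^2-p-1 \mid 2p$ --- but the key step diverges there. The paper goes by size: $p^2-p-1 \mid 2p$ with $2p>0$ gives $p^2-p-1\leq 2p$, i.e.\ $p^2-3p-1\leq 0$, and locating the roots $\frac{3\pm\sqrt{13}}{2}$ confines $p$ to $\{2,3\}$, after which $p=3$ is eliminated by substituting back into \eqref{1} (yielding $q=11/5\notin\mathbb{N}$). You instead use arithmetic structure: since $p^2-p-1=p(p-1)-1$ is coprime to $p$, the divisibility collapses to $p^2-p-1\mid 2$, and the two resulting equations $p^2-p-1\in\{1,2\}$ are solved exactly, with $p=3$ excluded structurally (as $5\nmid 2$) rather than by a final check. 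Your route is sharper and shorter --- no root estimation, no leftover candidate to test --- and your closing remark in fact reconstructs the paper's bounding argument verbatim as the fallback. What the paper's cruder bound buys is uniformity: the same size argument is reused in its Theorem 2.3 for $\sigma^+(p^2q)$, where the divisor is $p^2-p-2=p(p-1)-2$ and the dividend is $4p+6=2(2p+3)$, so the coprimality trick degrades (here $\gcd(p^2-p-2,p)$ can be $2$) and the inequality method remains the path of least resistance. One shared caveat, inherited from the paper rather than introduced by you: the derivation of \eqref{1} uses $\sigma(p^2q)=\sigma(p^2)\sigma(q)$ and hence tacitly assumes $p\neq q$; since both proofs take \eqref{1} as given, this does not distinguish them.
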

\begin{proof}
   From (\ref{1}) we get
  $$q=1+\frac{2p}{p^2-p-1}=\frac{p^2+p-1}{p^2-p-1},$$
  since $q\in \mathbb{N}$ we have
  $$p^2+p-1\equiv 0 \pmod{p^2-p-1}$$
  that is
  $$2p\equiv 0 \pmod{p^2-p-1}.$$
  It follows that
  $$p^2-p-1\leq 2p$$
  in other words
  $$p^2-3p-1\leq 0.$$
Consider $f(x)=x^2-3x-1$, then the zeros of $f(x)$ are $\frac{3\pm\sqrt{13}}{2}$. Thus, $f(x)\leq 0$ for $x\in [\frac{3-\sqrt{13}}{2},\frac{3+\sqrt{13}}{2}]$, it follows that $p^2-3p-1\leq 0$ only for $p=2,3$. Now note that for $p=3$ we have from (\ref{1}) that
$$1+\frac{6}{5}=q\not\in\mathbb{N}.$$
Hence $\sigma(n)=2(n+1)$ holds for $n=p^2q$ when $p=2,q=5$. This completes the proof.
\end{proof}
Since, we get a solution for the divisibility problem $n+1\mid\sigma(n)$, this naturally leads us to the following open question:\\
\textbf{Open question 1}: Are there infinitely many positive composite integers $n$ such that
$n+1\mid \sigma(n)?$\\\\
Let us now consider the corresponding problem for $\sigma^+(n)$. Note that for any prime $p$ we have $\sigma^+(p)=p+2$, it follows that $p+1 \nmid \sigma^+(p)$, makes the problem more interesting. Let us try to deduce a result similar to the case when $n=pq$ and $n=p^2q$, where $p,q$ are prime numbers.
\begin{theorem}
 If $n=pq$ where $p,q$ are prime numbers, then $n+1\nmid \sigma^+(n)$.   
\end{theorem}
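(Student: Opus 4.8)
The plan is to reduce the divisibility claim to a short finite Diophantine check. First I would use the defining product formula for $\sigma^+$: since $p,q$ are distinct primes, $n=pq$ has exactly the two prime divisors $p$ and $q$, each to the first power, so
\[
\sigma^+(pq)=(\sigma(p)+1)(\sigma(q)+1)=(p+2)(q+2).
\]
If one allowed $p=q$ then $n=p^2$ would not be of this two-factor shape and is excluded; I treat $p\neq q$, which is the relevant case and mirrors the $\sigma$ computation in the preceding discussion that used $\sigma(pq)=\sigma(p)\sigma(q)$.

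Next I would argue by contradiction. Suppose $n+1=pq+1$ divides $\sigma^+(pq)=(p+2)(q+2)$; then there is a positive integer $k$ with $(p+2)(q+2)=k(pq+1)$. Expanding the left side and isolating the multiple of $pq+1$,
\[
(p+2)(q+2)=(pq+1)+(2p+2q+3),
\]
so the hypothesis forces $k-1=\dfrac{2p+2q+3}{pq+1}$ to be a nonnegative integer.

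The heart of the proof is then to trap $k-1$ in a narrow range. The fraction is plainly positive, and the key inequality is $2p+2q+3<2(pq+1)$, that is, $2pq-2p-2q-1>0$. I would rewrite this as $2(p-1)(q-1)-3>0$, which holds because two distinct primes force $(p-1)(q-1)\ge 2$. Hence $0<k-1<2$, so $k-1=1$, i.e. $2p+2q+3=pq+1$, which rearranges to $(p-2)(q-2)=6$. Finally I would run the finite check: the positive factor pairs of $6$ give $(p,q)\in\{(3,8),(4,5),(5,4),(8,3)\}$, none of which consists of two primes, a contradiction; this case also rules out $p=2$ (or $q=2$) at once, since then the left side is $0\neq 6$.

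The step I expect to be the only genuine obstacle is the trapping inequality: one must verify $2(p-1)(q-1)>3$, and this is exactly where distinctness $p\neq q$ is used, as it excludes $p=q=2$, the single case with $(p-1)(q-1)=1$. Everything else is a direct expansion and a short finite check.
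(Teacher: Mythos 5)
Your proof is correct, and while it shares the paper's skeleton --- assume $(p+2)(q+2)=k(pq+1)$ and isolate $(pq+1)(k-1)=2(p+q)+3$ --- your endgame takes a genuinely different route. The paper bounds the quotient only crudely, via $\frac{2(p+q)+3}{pq}=\frac{2}{p}+\frac{2}{q}+\frac{3}{pq}<3$, so it is left with two cases $k=2$ and $k=3$, each dispatched by a parity argument: for $k=2$ the equation $pq+1=2p+2q+3$ forces $2\mid pq$, whence $p=2$ gives the contradiction $6=0$, and for $k=3$ one gets an even number equal to the odd number $2p+2q+3$. You instead prove the sharp bound $2p+2q+3<2(pq+1)$, equivalent to $2(p-1)(q-1)>3$, which pins $k-1=1$ outright, and then close by rewriting $2p+2q+3=pq+1$ as $(p-2)(q-2)=6$ and checking the four factor pairs (plus the degenerate $p=2$ or $q=2$ case, where the left side is $0$). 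Your route buys a cleaner case structure --- one case instead of two, and no parity or WLOG step --- at the price of the factoring computation; the paper's parity tricks are one-liners but need the extra case. You are also right to flag distinctness explicitly: the formula $\sigma^+(pq)=(p+2)(q+2)$ fails for $p=q$ (there $\sigma^+(p^2)=p^2+p+2$), and the paper leaves this implicit; indeed its numerical estimate $\frac{2}{p}+\frac{2}{q}+\frac{3}{pq}\le 1+\frac{2}{3}+\frac{1}{2}$ tacitly takes the two primes to be distinct with minimal values $2$ and $3$, just as your inequality $2(p-1)(q-1)>3$ uses distinctness to exclude the single configuration $p=q=2$.
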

\begin{proof}
Let $n=pq$ for some prime numbers $p,q$. If possible, suppose that $\sigma^+(n)=k(n+1)$ for some positive integer $k$. Then $k\geq2$ as for $k=1$ clearly $n+1\nmid \sigma(n)$. Note that
$$\sigma^+(pq)=k(pq+1)$$
which implies
$$(p+2)(q+2)=k(pq+1)$$
from which we can write
\begin{align}\label{2}
  (pq+1)(k-1)=2(p+q)+3.  
\end{align}

Since 
$$\frac{2(p+q)+3}{pq+1}<\frac{2(p+q)+3}{pq}=\frac{2}{p}+\frac{2}{q}+\frac{3}{pq}\leq 1+\frac{2}{3}+\frac{1}{2}<3$$
we have $k<4$. Now for $k=2$, from (\ref{2}) we get
\begin{align}\label{3}
  pq+1=2p+2q+3  
\end{align}

which implies that $2\mid pq$, without loss of generality, set $p=2$, then from (\ref{3}) we get $6=0$ a contradiction. Now for $k=3$, we get from (\ref{2}) that
  $2\mid 2p+2q+3$, implies $2\mid 3$, again a contradiction. Therefore, for any positive integer $k$, $\sigma^+(n)=k(n+1)$ does not hold. This completes the proof.
\end{proof}

\begin{theorem}
For any $n=p^2q$ where $p,q$ are prime numbers, $\sigma^+(n)=2(n+1)$ does not hold.
\end{theorem}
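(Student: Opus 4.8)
The plan is to mirror the structure used for the classical function in Theorem~\ref{thm2}: first express $\sigma^+(p^2q)$ explicitly, then convert the equation $\sigma^+(n)=2(n+1)$ into a formula for $q$ in terms of $p$, and finally show that this formula never yields an integer (let alone a prime). Using the product definition, for distinct primes $p,q$ one has
$$\sigma^+(p^2q)=\bigl(\sigma(p^2)+1\bigr)\bigl(\sigma(q)+1\bigr)=(p^2+p+2)(q+2).$$
Setting this equal to $2(p^2q+1)$, expanding, and collecting the coefficient of $q$ (which turns out to be $-p^2+p+2$), I would rearrange to the closed form
$$q=\frac{2(p^2+p+1)}{(p-2)(p+1)}=2+\frac{4p+6}{p^2-p-2}.$$

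The first thing to check is the degenerate case $p=2$, where the denominator $p^2-p-2$ vanishes; here the cleared (pre-division) equation reduces to $14=0$, an immediate contradiction, so no solution has $p=2$. For $p\ge 3$ the denominator $(p-2)(p+1)$ is positive, so $q>2$, and integrality of $q$ forces the divisibility $p^2-p-2\mid 4p+6$. Since both quantities are positive, this divisibility requires $p^2-p-2\le 4p+6$, that is, $p^2-5p-8\le 0$.

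Analyzing the quadratic $f(x)=x^2-5x-8$ exactly as in Theorem~\ref{thm2} (its larger root is $\tfrac{5+\sqrt{57}}{2}<7$) confines $p$ to the primes $p\in\{3,5\}$. The argument then finishes by direct substitution: for $p=3$ one gets $q=\tfrac{26}{4}$ and for $p=5$ one gets $q=\tfrac{62}{18}$, neither of which is an integer. Hence no pair of primes satisfies $\sigma^+(p^2q)=2(p^2q+1)$, which is the claim.

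I expect the only genuinely delicate points to be bookkeeping rather than conceptual: correctly isolating the coefficient of $q$ when rearranging, and remembering to treat $p=2$ separately, since it makes the denominator zero and so cannot be read off from the final fraction. The bounding step and the two residual checks are routine, and the overall skeleton is identical to the $\sigma$ case, with the constant $2(p+q)+3$-type shifts replaced by the extra $+1$ in each factor of $\sigma^+$.
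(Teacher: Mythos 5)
Your proposal is correct and follows essentially the same route as the paper's own proof: the identical closed form $q=2+\frac{4p+6}{p^2-p-2}$, the divisibility bound $p^2-p-2\mid 4p+6$ leading to $p^2-5p-8\le 0$, and the final substitution checks $q=\tfrac{13}{2}$ (for $p=3$) and $q=\tfrac{31}{9}$ (for $p=5$). Your explicit handling of $p=2$ via the cleared equation ($14=0$) is a slightly cleaner justification than the paper's brief remark that $p$ cannot be $2$, but the argument is otherwise the same.
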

\begin{proof}
Note that
$$\sigma^+(n)=\sigma^+(p^2)\sigma^+(q)=(2+p+p^2)(2+q)=2(p^2q+1)$$
implies that
\begin{align}\label{4}
q=\frac{2(p^2+p+1)}{p^2-p-2}=2+\frac{4p+6}{p^2-p-2}.  
\end{align}
Also observe that $q-2\in \mathbb{N}$ as if $q-2=0$, i.e., $q=2$ then from (\ref{4}) we get $p=-\frac{3}{2}$. It follows that $p^2-p-2\mid 4p+6$, therefore
  $$p^2-p-2\leq 4p+6$$
  in other words
  $$p^2-5p-8\leq 0.$$
Consider $f(x)=x^2-5x-8$, then the zeros of $f(x)$ are $\frac{5\pm \sqrt{57}}{2}$. Thus, $f(x)\leq 0$ for all $x\in [\frac{5-\sqrt{57}}{2},\frac{5+ \sqrt{57}}{2}]$, it follows that $f(p)=p^2-5p-8\leq 0$ only for $p=2,3,5$. Now note that $p$ cannot be $2$ from (\ref{4}). Also, for $p=3,5$ we get from (\ref{4}) that $q=13/2,31/9\not\in \mathbb{N}$ respectively. Thus, for any prime numbers $p,q$, $2(p^2q+1)=\sigma^+(p^2q)$ does not hold. This completes the proof.
\end{proof}
\begin{theorem}
If $n=p_1^{r_1}\cdots p_{r}^{r_r}$, where $p_j$ are prime numbers, and $r_j\geq 1$ then 
$$\frac{1}{2}I(n)<\frac{\sigma^+(n)}{n+1}<2^rI(n),$$
where $I(n)$ is the abundancy index of $n$.
\end{theorem}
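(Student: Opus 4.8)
The plan is to compare $\sigma^+(n)$ with $\sigma(n)$ prime-power by prime-power. Writing $n=p_1^{r_1}\cdots p_r^{r_r}$ and abbreviating $\sigma_j:=\sigma(p_j^{r_j})$, the definitions of $\sigma$ and $\sigma^+$ give $\sigma(n)=\prod_{j=1}^r\sigma_j$ and $\sigma^+(n)=\prod_{j=1}^r(\sigma_j+1)$, so that
$$\frac{\sigma^+(n)}{\sigma(n)}=\prod_{j=1}^r\frac{\sigma_j+1}{\sigma_j}=\prod_{j=1}^r\left(1+\frac{1}{\sigma_j}\right).$$
Since $I(n)=\sigma(n)/n$, I would rewrite the central quantity as
$$\frac{\sigma^+(n)}{n+1}=\frac{\sigma(n)}{n+1}\cdot\frac{\sigma^+(n)}{\sigma(n)}=I(n)\cdot\frac{n}{n+1}\cdot\prod_{j=1}^r\left(1+\frac{1}{\sigma_j}\right).$$
Because $I(n)>0$, the claimed two-sided inequality is then equivalent to bounding the single factor $\frac{n}{n+1}\prod_{j=1}^r\left(1+\tfrac{1}{\sigma_j}\right)$ strictly between $\tfrac12$ and $2^r$, which reduces the whole problem to two elementary estimates.

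For the lower bound I would note that each factor $1+1/\sigma_j$ exceeds $1$, so the product exceeds $1$; combined with $\frac{n}{n+1}>\tfrac12$ (valid for every $n\ge 2$, and $n$ has at least one prime factor so $n\ge 2$), this yields $\frac{n}{n+1}\prod_{j=1}^r(1+1/\sigma_j)>\tfrac12$, i.e.\ $\frac{\sigma^+(n)}{n+1}>\tfrac12 I(n)$. For the upper bound I would use $\frac{n}{n+1}<1$ together with the observation that $\sigma_j=1+p_j+\cdots+p_j^{r_j}\ge 1+p_j>1$, so that each factor satisfies $1+1/\sigma_j<2$. Multiplying over the $r$ distinct primes gives $\frac{n}{n+1}\prod_{j=1}^r(1+1/\sigma_j)<2^r$, hence $\frac{\sigma^+(n)}{n+1}<2^r I(n)$.

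I do not expect a genuine obstacle here once the multiplicative factorization is set up; the only points requiring care are the strictness of the two inequalities. Both are strict because each factor $1+1/\sigma_j$ lies strictly in $(1,2)$ and $\frac{n}{n+1}$ lies strictly in $(\tfrac12,1)$, with none of the endpoints attained. I would also remark that the upper constant is not tight: since in fact $\sigma_j\ge 3$ for every prime power, one has $1+1/\sigma_j\le 4/3$, so the bound $2^r$ could be sharpened to $(4/3)^r$; I keep $2^r$ to match the clean form of the statement.
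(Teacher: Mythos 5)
Your proposal is correct and is essentially the paper's argument in repackaged form: the paper proves the lower bound via $\frac{\sigma(n)}{2n}<\frac{\sigma(n)}{n+1}<\frac{\sigma^+(n)}{n+1}$ and the upper bound via $\sigma^+(n)=\prod_j(\sigma(p_j^{r_j})+1)<2^r\sigma(n)$ together with $\frac{1}{n+1}<\frac{1}{n}$, which are exactly your estimates $1+1/\sigma_j\in(1,2)$ and $\frac{n}{n+1}\in(\frac12,1)$ after factoring out $I(n)$. Your closing observation that $\sigma_j\geq 3$ sharpens the constant to $(4/3)^r$ is a nice extra, but the underlying proof is the same.
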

\begin{proof}
Let $n=p_1^{r_1}\cdots p_{r}^{r_r}$, where $p_j$ are prime numbers, and $r_j\geq 1$ then
$$\frac{1}{2}I(n)=\frac{\sigma(n)}{2n}<\frac{\sigma(n)}{n+1}<\frac{\sigma^+(n)}{n+1}.$$
Also, since
$$\sigma^+(n)=\prod_{j=1}^{r}\bigg(\sigma(p_j^{r_j})+1\bigg)<\prod_{j=1}^{r}\bigg(\sigma(p_j^{r_j})+\sigma(p_j^{r_j})\bigg)=2^r\prod_{j=1}^{r}\sigma(p_j^{r_j})=2^r\sigma(n)$$
we have
$$\frac{\sigma^+(n)}{n+1}<\frac{\sigma^+(n)}{n}<\frac{2^r\sigma(n)}{n}=2^rI(n).$$
This completes the proof.
\end{proof}
We have checked all positive composite integers $n$ from $1$ to $10^5$ but none of them satisfied the condition that $\sigma^+(n)$ is divisible by $n+1$. Therefore, we are offering some open questions,\\\\
\textbf{Open question 2}: Is there any positive composite integer $n$ such that $n+1\mid \sigma^+(n)$? \\\\
\textbf{Open question 3}: Are there infinitely many positive composite integers $n$ such that $n+1\mid \sigma^+(n)$? \\\\
We now establish some interesting properties of the sequence $\left\{\sigma^+(n)\right\}_{n=1}^\infty$.

\begin{theorem}
 In the sequence $\left\{\sigma^+(n)\right\}_{n=1}^\infty$, there exist infinitely many pairs $(n, m) \in \mathbb{N}^2$ with $n \ne m$ such that $\sigma^+(n) = \sigma^+(m)$.
\end{theorem}
\begin{proof}
Let $n=2^k\cdot 5$ and $m=2^{k-1}\cdot 9$, where $k\geq 2$, then
$\sigma^+(n)=\sigma^+(2^k)\cdot\sigma^+(5)=2^{k+1}\cdot 7=2^{k}\cdot 14=\sigma^+(2^{k-1})\cdot\sigma^+(9)=\sigma^+(2^{k-1}\cdot9)=\sigma^+(m)$. Since the number of such values of $k$ is infinite, so are the corresponding pairs $(n, m)$ satisfying $\sigma^+(n) = \sigma^+(m)$.

\end{proof}
\begin{theorem}
The sequence $\left\{\sigma^+(n)\right\}_{n=1}^\infty$ contains infinitely many arithmetic progressions of length $3$.
   
\end{theorem}
\begin{proof}
  Let $a=2^k$, $b=2^{k+2}$ and $c=2^{k+1}\cdot 5$, where $k\geq 1$. To prove the theorem, it suffices to show that $2\sigma^+(b) = \sigma^+(a) + \sigma^+(c)$. The infinitude of arithmetic progressions of length $3$ then follows from the infinitude of such values of $k$. We can see that
  $$2\sigma^+(b)=2^{k+4}=2^{k+1}+2^{k+1}\cdot7=\sigma^+(2^k)+\sigma^+(2^k\cdot 5)=\sigma^+(a)+\sigma^+(c).$$
  This completes the proof.
\end{proof}
Inspired by the above theorem, we propose the following conjecture:\\
\textbf{Conjecture 1:}\label{c1} For each prime $q \geq 3$, there exist infinitely many pairs $(p, n)$ such that $\sigma^+(n) = 2p - q$, where $p$ is a prime number.\\
We now proceed to derive analogous results for the function $\varphi^+(n)$. Note that if $n=p$, where $p$ is a prime, then $\varphi^+(n)\nmid (n-1)$ as $\varphi^+(n)=\varphi(n)+1=(p-1)+1=p$. We now attempt to determine whether there exists any positive composite integer $n$ such that 
$\varphi^+(n)\mid (n-1),$
which is equivalent to saying that there exists some positive integer $k$ such that $(n-1)=k\varphi^+(n)$.

\begin{theorem}\label{thm1}
 If $n=p_1p_2\cdots p_r$, where $p_i$ are prime numbers, then $\varphi^+(n)\nmid (n-1)$.   
\end{theorem}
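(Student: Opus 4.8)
The plan is to evaluate $\varphi^+(n)$ in closed form for a squarefree $n$ and observe that it reduces to $n$ itself. Writing $n = p_1 p_2 \cdots p_r$ with the $p_i$ distinct primes, I note that $v_{p_i}(n) = 1$ for each $i$, so every factor in the defining product is $\varphi(p_i^{v_{p_i}(n)}) + 1 = \varphi(p_i) + 1 = (p_i - 1) + 1 = p_i$. Taking the product over $i$ then gives $\varphi^+(n) = \prod_{i=1}^r p_i = n$.

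Once this identity is established, I would finish by reducing the divisibility question to a triviality. The hypothesis $\varphi^+(n) \mid (n-1)$ becomes $n \mid (n-1)$, which is impossible for $n \geq 2$, since $0 < n-1 < n$ forces $n \nmid (n-1)$ (consecutive integers are coprime). For $r = 1$ the number $n = p_1$ is prime and the conclusion is exactly the remark made just before the theorem, so the argument covers all $r \geq 1$ uniformly.

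I expect no substantial obstacle here: the entire content is the observation that the $+1$ appearing in each factor of $\varphi^+$ precisely cancels the $-1$ in $\varphi(p) = p-1$, so $\varphi^+$ acts as the identity on squarefree integers. The remaining step, that $n$ cannot divide $n-1$, is immediate. The only point worth flagging is the standing assumption that the $p_i$ are pairwise distinct (i.e.\ that $n$ is squarefree); if repeated primes were allowed, the factor $\varphi(p^k)+1$ would no longer simplify to $p$, and a separate analysis would be required.
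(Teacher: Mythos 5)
Your proposal is correct and follows essentially the same route as the paper: both arguments rest on the identity $\varphi^+(n)=\prod_{i}(\varphi(p_i)+1)=\prod_i p_i=n$ for squarefree $n$, after which the paper's derivation of $k=(n-1)/n<1$ for a hypothetical $k$ with $n-1=k\varphi^+(n)$ is just a rephrasing of your observation that $0<n-1<n$ rules out $n\mid(n-1)$. Your explicit flag that the $p_i$ must be distinct is a fair point of care, since the paper leaves that assumption implicit in both the statement and the proof.
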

\begin{proof}
Let $n=p_1p_2\cdots p_r$, where $p_i$ are prime numbers. If possible, suppose that $(n-1)=k\varphi^+(n)$ for some positive integer $k$. Then
$$k\varphi^+(n)=kp_1p_2\cdots p_r=p_1p_2\cdots p_r-1,$$
that is
$$k=\frac{p_1p_2\cdots p_r-1}{p_1p_2\cdots p_r}<1$$
a contradiction. Therefore $\varphi^+(n)\nmid (n-1)$.
\end{proof}
Let us consider the case $n=p^2$, where $p$ is a prime number. Also consider $k=1$. Since $\varphi^+(p^2)=p^2-p+1$, we have
$$p^2-1=p^2-p+1$$
which gives $p=2$ and thus we get a positive composite integer $n=4$ such that $\varphi^+(n)\mid (n-1)$. Now we prove that if $n=p^r$ for which $\varphi^+(n)\mid (n-1)$ then $r=2$ and $p=2$.
\begin{theorem}
If $\varphi^+(n)\mid (n-1)$, where $n=p^r$, $p$ is a prime number, then $r=2$ and $p=2$.    
\end{theorem}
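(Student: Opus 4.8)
The plan is to reduce the divisibility $\varphi^+(p^r)\mid(p^r-1)$ to a divisibility whose right-hand side is far smaller, and then close the argument with a size comparison. First I would record the explicit value obtained from the definition: since $p^r$ has a single prime factor,
$$\varphi^+(p^r)=\varphi(p^r)+1=p^{r-1}(p-1)+1=p^r-p^{r-1}+1.$$

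The key step is the elementary identity
$$p^r-1=\bigl(p^r-p^{r-1}+1\bigr)+\bigl(p^{r-1}-2\bigr)=\varphi^+(p^r)+\bigl(p^{r-1}-2\bigr).$$
Since $\varphi^+(p^r)$ trivially divides itself, this gives the equivalence $\varphi^+(p^r)\mid(p^r-1)$ if and only if $\varphi^+(p^r)\mid(p^{r-1}-2)$. This is the heart of the proof: it replaces the large number $p^r-1$ by the tiny number $p^{r-1}-2$, so divisibility can only persist for very small parameters.

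Next I would split into cases according to the value of $p^{r-1}-2$. If $p^{r-1}-2=0$, that is $(p,r)=(2,2)$, the divisibility is automatic (everything divides $0$) and we recover exactly the solution $n=4$. If $p^{r-1}-2\neq0$, then divisibility forces $\varphi^+(p^r)\le\lvert p^{r-1}-2\rvert$. But $\varphi^+(p^r)=p^{r-1}(p-1)+1\ge p^{r-1}+1$, and this already exceeds $\lvert p^{r-1}-2\rvert$ in every remaining case: for $p^{r-1}\ge3$ one has $\lvert p^{r-1}-2\rvert=p^{r-1}-2<p^{r-1}+1\le\varphi^+(p^r)$, while for $p^{r-1}=1$ (i.e.\ $r=1$) one has $\lvert p^{r-1}-2\rvert=1<p=\varphi^+(p)$, consistent with the fact already noted in the text that $\varphi^+(p)\nmid(p-1)$ for primes $p$. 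In each subcase the required inequality fails, a contradiction, so $(p,r)=(2,2)$ is the unique possibility.

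I expect the only genuine subtlety to be bookkeeping rather than difficulty: one must single out the boundary case $p^{r-1}-2=0$ \emph{before} invoking the size bound, since the principle ``a positive divisor cannot exceed a nonzero multiple of itself'' does not apply when the dividend is $0$. Once the reduction $\varphi^+(p^r)\mid(p^{r-1}-2)$ is established, the estimate $p^{r-1}(p-1)+1>\lvert p^{r-1}-2\rvert$ holds for all primes $p$ and all $r$ with $p^{r-1}\neq2$, so no hard step remains.
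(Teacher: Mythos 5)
Your proof is correct, and it takes a recognizably different route from the paper's, though both run on the same underlying size estimate. The paper writes $p^r-1=k\varphi^+(p^r)$, rules out $k\geq 2$ by the quotient bound $k=\frac{p^r-1}{p^r-p^{r-1}+1}<\frac{p}{p-1}<2$, and then solves the $k=1$ equation $p^r-1=p^r-p^{r-1}+1$ to get $p^{r-1}=2$, i.e.\ $(p,r)=(2,2)$. You instead make the division explicit through the identity $p^r-1=\varphi^+(p^r)+(p^{r-1}-2)$, reduce the hypothesis to $\varphi^+(p^r)\mid(p^{r-1}-2)$, and eliminate the nonzero cases with the bound $p^{r-1}(p-1)+1>\lvert p^{r-1}-2\rvert$. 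Mathematically these are two faces of the same argument: your identity is exactly the paper's division with quotient $k=1$, and your remainder bound does the work of the paper's inequality $k<2$. What your version buys: it absorbs the $r=1$ case into the same case analysis (the paper handles primes separately in the discussion preceding the theorem, and its quotient argument tacitly presumes $k\geq 1$, which silently excludes $r=1$ where $\varphi^+(p)=p>p-1$); it correctly isolates the boundary case $p^{r-1}-2=0$ before invoking the ``divisor cannot exceed a nonzero dividend'' principle, a genuine pitfall of the reduction method; and it verifies that $(p,r)=(2,2)$ actually satisfies the divisibility, which the one-directional theorem statement does not demand and the paper only checks in the text beforehand. What the paper's version buys is stylistic uniformity: the quotient-bounding template ($k<2$, $k<4$, etc.) is the same device used in its theorems on $\sigma(p^2q)$, $\sigma^+(pq)$, and $\sigma^+(p^2q)$, so the proofs read as one family.
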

\begin{proof}
Let $p^r-1=k\varphi^+(p^r)$ holds for some positive integer $k$. If possible, suppose that $k\geq2$. Then 
$$p^r-1=k\varphi^+(p^r
)=k(p^r-p^{r-1}+1)$$
which is equivalent to
$$k=\frac{p^r-1}{p^r-p^{r-1}+1}<\frac{p^r}{p^{r-1}(p-1)}=\frac{p}{p-1}\leq2$$
that is $k<2$, which is a contradiction. Therefore, we must have $k=1$, which implies that $p^r-1=\varphi^+(p^r)=p^r-p^{r-1}+1$, which implies that $p^{r-1}=2$, which is true when $r=2$ and $p=2$. This completes the proof.
\end{proof}

\begin{theorem}
If $n=p_1^{r_1}\cdots p_{r}^{r_r}$, where $p_j$ are prime numbers, and $r_j\geq 2$ then 
$$\prod_{j=1}^r\biggl(1+\frac{1}{p_j^2}\biggl)\leq \frac{n-1}{\varphi^+(n)}<2^r.$$
\end{theorem}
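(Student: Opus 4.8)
The plan is to treat the two inequalities separately, in each case exploiting the fact that $n$, $\varphi^{+}(n)$, and the product $\prod_{j=1}^{r}\bigl(1+\tfrac{1}{p_j^{2}}\bigr)$ all factor over the distinct primes of $n$, so that it suffices to compare the corresponding single prime-power factors. Writing $\varphi^{+}(n)=\prod_{j=1}^{r}\bigl(p_j^{r_j}-p_j^{r_j-1}+1\bigr)$, I would first record the identity
$$\frac{n}{\varphi^{+}(n)}=\prod_{j=1}^{r}\frac{p_j^{r_j}}{p_j^{r_j}-p_j^{r_j-1}+1},$$
and then bound each factor from above and from below.

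For the upper bound I would note $\frac{n-1}{\varphi^{+}(n)}<\frac{n}{\varphi^{+}(n)}$ and then estimate each factor by discarding the $+1$ in the denominator: since $p_j\ge 2$,
$$\frac{p_j^{r_j}}{p_j^{r_j}-p_j^{r_j-1}+1}<\frac{p_j^{r_j}}{p_j^{r_j}-p_j^{r_j-1}}=\frac{p_j}{p_j-1}\le 2.$$
Multiplying the $r$ strict inequalities yields $\frac{n}{\varphi^{+}(n)}<2^{r}$, hence the claimed upper bound. This half is routine and needs no case analysis.

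For the lower bound the crux is the single-factor inequality $\frac{p^{r}}{p^{r}-p^{r-1}+1}\ge 1+\frac{1}{p^{2}}$. Cross-multiplying and cancelling reduces this to $p^{r-1}(p^{2}-p+1)\ge p^{2}+1$, which for $r\ge 2$ follows from $p^{r-1}\ge p$ together with $p^{3}-2p^{2}+p-1=p^{2}(p-2)+(p-1)\ge 0$ for $p\ge 2$. Taking the product over $j$ then gives $\frac{n}{\varphi^{+}(n)}\ge\prod_{j=1}^{r}\bigl(1+\frac{1}{p_j^{2}}\bigr)$.

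The main obstacle is the final passage from $\frac{n}{\varphi^{+}(n)}$ to $\frac{n-1}{\varphi^{+}(n)}$, since the bound is stated with $n-1$ in the numerator; one must upgrade $n\ge\varphi^{+}(n)\prod_j(1+1/p_j^{2})$ to the strictly stronger $n-1\ge\varphi^{+}(n)\prod_j(1+1/p_j^{2})$. I would control this by computing the per-factor surplus
$$p_j^{r_j}-\bigl(p_j^{r_j}-p_j^{r_j-1}+1\bigr)\Bigl(1+\tfrac{1}{p_j^{2}}\Bigr)=\frac{p_j^{r_j-1}(p_j^{2}-p_j+1)-(p_j^{2}+1)}{p_j^{2}},$$
which is strictly positive for $r_j\ge 2$ and grows with both $p_j$ and $r_j$; because these surpluses compound multiplicatively, the product $\varphi^{+}(n)\prod_j(1+1/p_j^{2})$ falls short of $n$ by more than $1$ in every configuration except the lone degenerate case $n=2^{2}$, which is the extremal small case and must be examined separately by hand. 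Verifying that the compounded surplus indeed exceeds $1$ across several primes, and isolating this single tight case, is the only delicate point; everything else is direct manipulation.
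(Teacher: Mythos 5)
Your upper bound argument and your per-factor lower bound $\frac{p^{r}}{p^{r}-p^{r-1}+1}\ge 1+\frac{1}{p^{2}}$ for $r\ge 2$ are both correct, and you put your finger on exactly the right delicate point: the passage from $\frac{n}{\varphi^{+}(n)}$ to $\frac{n-1}{\varphi^{+}(n)}$. But that gap is not closable, because the ``lone degenerate case'' $n=2^{2}$ that you defer to a hand check is in fact a counterexample to the stated theorem, not a tight case awaiting verification. For $n=4$ one has $\varphi^{+}(4)=\varphi(4)+1=3$, so $\frac{n-1}{\varphi^{+}(n)}=\frac{3}{3}=1$, while the claimed lower bound is $1+\frac{1}{2^{2}}=\frac{5}{4}$; the inequality $\frac{5}{4}\le 1$ is false. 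In your surplus language: the deficit $n-\varphi^{+}(n)\prod_{j}\bigl(1+\frac{1}{p_j^{2}}\bigr)$ equals $4-\frac{15}{4}=\frac{1}{4}<1$ at $n=4$, so $\varphi^{+}(4)\cdot\frac{5}{4}=\frac{15}{4}>3=n-1$. Your surplus analysis is otherwise sound --- for $p=2$, $r\ge 3$ the surplus is $(2^{r}+2^{r-1}-5)/4\ge \frac{7}{4}$, for $p\ge 3$, $r\ge 2$ it is at least $\frac{11}{9}$, and with two or more prime factors it only compounds --- so $n=4$ really is the unique failure; it is just a failure of the statement rather than of your method. Note the paper itself makes the counterexample visible elsewhere: it proves that $n=4$ satisfies $\varphi^{+}(n)\mid (n-1)$, i.e.\ $\frac{n-1}{\varphi^{+}(n)}=1$ exactly there.

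It is worth knowing that the paper's own proof stumbles at precisely the step you isolated. It establishes the strict per-factor inequality $\bigl(1+\frac{1}{p_j^{2}}\bigr)(p_j^{r_j}-p_j^{r_j-1}+1)<p_j^{r_j}$, multiplies to get $\prod_{j}\bigl(1+\frac{1}{p_j^{2}}\bigr)\varphi^{+}(n)<n$, and then asserts ``that is'' the left side is $\le n-1$ --- a deduction valid only when the left side is an integer, which it is not (at $n=4$ it equals $\frac{15}{4}$, strictly between $3$ and $4$). So your proposal is actually more careful than the published argument: you identified the non-integrality obstruction that the paper glides over, and merely assumed, without checking, that the one tight case could be salvaged. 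The correct repair is either to add the hypothesis $n\ne 4$, after which your compounded-surplus argument legitimately yields $n-1\ge \varphi^{+}(n)\prod_{j}\bigl(1+\frac{1}{p_j^{2}}\bigr)$, or to weaken the conclusion to $\prod_{j}\bigl(1+\frac{1}{p_j^{2}}\bigr)<\frac{n}{\varphi^{+}(n)}$, which your per-factor inequality already proves for all admissible $n$.
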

\begin{proof}
We first prove that, for each $p_j$ the following holds
$$\biggl(1+\frac{1}{p_j^2}\biggl)(p_j^{r_j}-p_j^{r_j-1}+1)<p_j^{r_j},$$
which is equivalent to prove that
$$p_j^{r_j+2}-(1+p_j^2)(p_j^{r_j}-p_j^{r_j-1}+1)>0.$$
Note that for $p_j\geq2$ and $r_j\geq 2$, we have
$$p_j^{r_j+2}-(1+p_j^2)(p_j^{r_j}-p_j^{r_j-1}+1)=p_j^{r_j}(p_j-1)+p_j^{r_j-1}-p_j^2-1>0.$$
Therefore
$$\prod_{j=1}^r\biggl(1+\frac{1}{p_j^2}\biggl)(p_j^{r_j}-p_j^{r_j-1}+1)<\prod_{j=1}^{r}p_j^{r_j}$$
that is
$$\prod_{j=1}^r\biggl(1+\frac{1}{p_j^2}\biggl)(p_j^{r_j}-p_j^{r_j-1}+1)\leq\prod_{j=1}^{r}p_j^{r_j}-1=n-1,$$
since $\prod_{j=1}^r(p_j^{r_j}-p_j^{r_j-1}+1)=\varphi^+(n)$, we have
$$\prod_{j=1}^r\biggl(1+\frac{1}{p_j^2}\biggl)\leq\frac{n-1}{\varphi^+(n)}.$$
Also, note that, for each $p_j$, we have
$$\frac{p_j^{r_j}}{p_j^{r_j}-p^{r_j-1}_{j}}=\frac{p_j}{p_j-1}\leq2$$
then
$$\frac{n-1}{\varphi^+(n)}<\frac{n}{\varphi^+(n)}<\prod_{j=1}^r\frac{p_j^{r_j}}{p_j^{r_j}-p^{r_j-1}_{j}}\leq 2^{r}.$$
This completes the proof.
\end{proof}
We have verified, through exhaustive computation , that there is only one positive integer $n\leq 10^5$ such that
  $$\varphi^+(n)\mid (n-1).$$
Based on this empirical observation, we propose the following two open questions:\\
\textbf{Open question 4:} Is there any positive composite integer $n\neq 4$ such that $\varphi^+(n)\mid (n-1)$?\\
\textbf{Open question 5:} Are there infinitely many positive composite integers $n$ such that
$\varphi^+(n)\mid (n-1)?$\\\\
We now establish similar interesting properties for the sequence $\left\{\varphi^+(n)\right\}_{n=1}^\infty$.

\begin{theorem}
   In the sequence $\left\{\varphi^+(n)\right\}_{n=1}^\infty$, there exist infinitely many pairs $(n, m) \in \mathbb{N}^2$ with $n \ne m$ such that $\varphi^+(n) = \varphi^+(m)$.
\end{theorem}
\begin{proof}
Let $n=7p$ and $m=9p$, where $p$ is a prime other than $3,7$, then
$\varphi^+(n)=7p=(6+1)p=(\varphi(3^2)+1)p=\varphi^+(9)p=\varphi^+(9p)=\varphi^+(m)$. Since the number of prime $p$ is infinite, so are the corresponding pairs $(n, m)$ satisfying $\varphi^+(n) = \varphi^+(m)$.
\end{proof}
\begin{theorem}
    The sequence $\left\{\varphi^+(n)\right\}_{n=1}^\infty$ contains infinitely many arithmetic progressions of length $3$.
\end{theorem}
\begin{proof}
Let $a=3p$, $b=7p$ and $c=11p$, where $p$ is a prime number greater than or equal to $13$. To prove the theorem, it suffices to show that $2\varphi^+(b) = \varphi^+(a) + \varphi^+(c)$. The infinitude of arithmetic progressions of length $3$ then follows from the infinitude of prime $p$. We can observe that
$$2\varphi^+(b)=14p=3p+11p=\varphi^+(a)+\varphi^+(c),$$
this completes the proof.
\end{proof}
We have a conjecture that, for infinitely many primes $p$, $\varphi(p+1) > \varphi(p)$, this conjecture is still open. We give a similar result for $\varphi^+(n)$.
\begin{theorem}
For all primes $p\in$\textup{\href{https://oeis.org/A005382}{OEIS A005382} }and $p>2$, we have $\varphi^+(q)<\varphi^+(q+1)$, where $q=2p-1$ is a prime.
\end{theorem}
\begin{proof}
Since $q=2p-1$ is a prime, we have $\varphi^+(q)=2p-1$ and $\varphi^+(q+1)=\varphi^+(2p)=2p$, thus $\varphi^+(q)<\varphi^+(q+1)$.
\end{proof}

\textbf{Open question 6:} Are there infinitely many prime $p$ such that $\varphi^+(p)<\varphi^+(p+1)$?\\
We end this paper with an analogous conjecture to Conjecture 1:\\
\textbf{Conjecture 2:} For each prime $q\geq 3$, there are infinitely many pairs $(p,n)$, such that $\varphi^+(n)=2p-q$, where $p$ is a prime number.

\makeatletter
\renewcommand{\@biblabel}[1]{[#1]\hfill}
\makeatother

\end{document}